\begin{document}

\setlength{\abovedisplayskip}{3pt}
\setlength{\belowdisplayskip}{3pt}

\begin{frontmatter}

\title{Invariant extended Kalman filter on matrix Lie groups} 
\author[authors]{Karmvir Singh Phogat}\ead{karmvir.p@gmail.com},
\author[authors]{Dong Eui Chang}\ead{dechang@kaist.ac.kr}
\address[authors]{School of Electrical Engineering, KAIST, Daejeon, 34141, South Korea}
          
\begin{keyword}
	Extended Kalman filter, Lie groups, differential Riccati equation.
\end{keyword}
                             
\begin{abstract}
We derive symmetry preserving invariant extended Kalman filters (IEKF) on matrix Lie groups. These Kalman filters have an advantage over conventional extended Kalman filters as the error dynamics for such filters are independent of the group configuration which, in turn, provides a uniform estimate of the region of convergence. The proposed IEKF differs from existing techniques in literature on the account that it is derived using minimal tools from differential geometry that simplifies its representation and derivation to a large extent. The filter error dynamics is defined on the Lie algebra directly instead of identifying the Lie algebra with an Euclidean space or defining the error dynamics in local coordinates using exponential map, and the associated differential Riccati equations are described on the corresponding space of linear operators using tensor algebra. The proposed filter is implemented for the attitude dynamics of the rigid body, which is a benchmark problem in control, and its performance is compared against a conventional extended Kalman filter (EKF). Numerical experiments support that the IEKF is computationally less intensive and gives better performance than the EKF.  
\end{abstract}

\end{frontmatter}

\section{Introduction}
The key objective of filtering is to estimate the state of a dynamical system by incorporating the state evolution driven by modeled system dynamics and the partial state information received from sensor measurements. The sensor measurements are corrupted with noise, and on the other hand, the system models are inherently inaccurate. Therefore, a filtering algorithm that provides an unbiased state estimation with minimum error variance is most desirable. One of the successful attempts in filter design for linear systems is the Kalman filter \cite{kalman1961} which is extensively used in various domains ranging from robotics, autonomous systems, finance and biology. The Kalman filtering technique is further extended to nonlinear systems and the resulting filters are popularly known, in literature, as extended Kalman filters (EKF) \cite{reif2000}. In contrast to classical Kalman filters, extended Kalman filters do not guarantee optimality, and may diverge, in case of fast dynamics, if the initial state estimates are not accurate enough. A detailed discussion on the convergence of EKF may be found in \cite{reif2000,krener2003}.

	For mechanical systems evolving on Lie groups with invariant dynamics, due to symmetry, a special class of symmetry preserving Kalman filters are devised which ensures that the error dynamics is independent of the group configuration. These filters are known in research literature as invariant extended Kalman filters (IEKFs) \cite{bonnabel2007,saccon2016,barrau2017,bourmaud2015,bonnabel2009}. The IEKF was first proposed for continuous-time systems on Lie groups with output measurements in Euclidean spaces in \cite{bonnabel2007} and for systems defined on open sets in Euclidean spaces having Lie group symmetry in \cite{bonnabel2009}. Subsequently, IEKFs have been generalized to continuous-time systems on Lie groups with discrete-time measurements in Euclidean spaces in \cite{barrau2015,barrau2017} and for discrete-time measurements on Lie groups in \cite{bourmaud2015}. A successful implementation of the IEKF for optical-inertial tracking system may be found in \cite{claasen2011}. In contrast to these techniques that employ group symmetry, a continuous-time Kalman filter on the orthogonal group is derived by posing a constrained optimal control problem that preserves the manifold structure in \cite{de2017}. The central idea behind IEKFs for systems on Lie groups, in existing literature, is to translate the error dynamics to Euclidean spaces using the \textit{log} map. Subsequently, the translated error dynamics is linearized and the Kalman gains are found by minimizing the variance of the linearized error dynamics. This intriguing idea provides a natural generalization of the extended Kalman filters to Lie groups. However, for invariant systems on Lie groups, the translation of the error dynamics to Euclidean spaces and its linearization can be largely simplified by directly deriving the linear error dynamics on the Lie algebra from the nonlinear error dynamics using variational approaches.
        
	In this article, we derive continuous-time IEKFs for invariant systems with measurements on matrix Lie groups. We provide a concise derivation of the IEKF in which linearization of the nonlinear error dynamics is done using variational approaches. This step avoids translation of the error dynamics to Euclidean spaces, using \textit{log} map, before its linearization. Therefore, the proposed IEKF is computationally less intensive as it does not require linearization of the error dynamics in local charts \cite{barrau2017,barrau2018invariant,bonnabel2007,bonnabel2009} at each iteration. Another distinguishing feature of the proposed technique is that the Kalman gain and the associated differential Riccati equation (DRE) are derived directly in operator spaces using tensor algebra. Moreover, to validate the performance of the proposed IEKF, we design the IEKF for the rigid body attitude dynamics and compare it with the conventional EKF. It is concluded from the numerical experiments that the proposed IEKF is computationally efficient and performs better than the conventional EKF. 

	This article unfolds as follows: Section \ref{sec:LeftInv} provides an introduction to left invariant systems on Lie groups, and the IEKF is proposed in Sec. \ref{sec:IEKF}. Section \ref{sec:DRE} is devoted to derivation of the Kalman gains and the associated differential Riccati equation in operator spaces. The implementation of the IEKF for the rigid body attitude dynamics and a comparative study of the IEKF and the EKF are given in Sec. \ref{sec:RigidBody}.         

\section{Left invariant systems on Lie groups}\label{sec:LeftInv}
	For ease of presentation, let us first define notions related to Lie groups:
\begin{definition}[{{Lie group}}]
A manifold $G$ is a Lie group if $G$ is a group, in the algebraic sense, with the property that the multiplication map 
\[
G \times G \ni (g,h) \mapsto m(g,h) \Let gh \in G,
\] 
and the inversion map 
\[
G  \ni g \mapsto i(g) \Let g^{-1} \in G
\] 
are smooth.
\end{definition}
Note that every Lie group is associated with its unique Lie algebra, and the Lie algebra $\mathfrak{g}$ of a Lie group $G$ is defined by the tangent space of $G$ at the group identity $I$, i.e., 
\[
\mathfrak{g} \Let T_{I}G.
\]
\begin{definition}[{{Left-action}}]
A left action of a Lie group $G$ on a manifold $M$ is a map 
\[G \times M \ni (g,p) \mapsto L_g(p) \Let g p \in M, \]
that satisfies
\begin{enumerate}
\item \(g_1 (g_2 p) = (g_1 g_2) p \quad \text{for all } g_1, g_2 \in G \;\text{ and } p \in M;\)
\item \(I p = p \quad \text{for all } p \in M, \; \text{ and the group identity, } I \in G. \)
\end{enumerate}
\end{definition}
\begin{definition}[{{Left-invariance}}]
A vector field $\mathcal{X}$ on the manifold $M$ is said to be left invariant if 
\[
T_{p} L_g\left(\mathcal{X}(p)\right) = \mathcal{X}\left(L_g(p)\right) \quad \text{ for all } g \in G \text{ and } p \in M,
\] 
where $T_{p} L_g$ is the tangent lift of the left action $L_g$ at $p \in M$.
A map $M \ni p \mapsto f(p) \in M$ is said to be left invariant under the left action if  
\[
f\left(gp\right) = f(p) \quad \text{ for all } g \in G \; \text{ and } p \in M.
\]
\end{definition}
A detailed exposition of Lie groups may be found in \cite{barrau2018invariant,lee}.

	 Let \m{G} be a matrix Lie group of dimension \m{m} with \m{I} as the group identity, and \m{\mathfrak{g}} be the Lie algebra of the Lie group \m{G}. Suppose that the matrix Lie group \m{G} is embedded into \m{\R^{n\times n}} and there is a projection map
\[
\R^{n \times n} \ni \alpha \mapsto \pi_{\mathfrak{g}}(\alpha) \in \mathfrak{g}.
\]
To fix notation, take an inner product $\langle\, , \rangle$  on $\R^{n\times n}$, and equip $\mathfrak{g}$ with the same inner product by restriction, and assume that the inner product on subspaces of \m{\R^{n\times n}} and their tensor or product spaces are induced from $\R^{n\times n}.$ Furthermore, we denote 
\[
 \mathfrak{g}^{k} \Let \underbrace{\mathfrak{g} \times \cdots \times\mathfrak{g}}_{k \text{ times}} \quad \text{ for } k \in \N,
\]
$L(\mathfrak{g},\mathfrak{g})$ as the space of linear operators from $\mathfrak{g}$ to $\mathfrak{g}$, and the adjoint operator of a linear map \m{\mathcal{A}} is denoted by $\mathcal{A}^*$. 

	Consider a left invariant system dynamics\footnote{Invariant systems on Lie groups accounts for a rich class of control systems with application in various domains such as robotics \cite{chirikjian2010information}, aerospace \cite{barrau2018invariant}, and quantum mechanics \cite{khaneja}. In this article, we restrict ourself to left invariant systems; however, the theory is well applicable to right invariant systems.} on the matrix Lie group:
\begin{equation}\label{eq:LieDyn}
\begin{aligned}
&\dot{g}(t) = g(t)\xi(t), \\
&\dot{\xi}(t) = f\left(t,\xi(t)\right) + w(t),\\
& y(t) = g(t)\Exp^{v(t)},  
\end{aligned}
\end{equation}
where
\begin{enumerate}[label={(\ref{eq:LieDyn}-\alph*)},leftmargin=*]
\item \m{\left(g(t), \xi(t)\right) \in G \times \mathfrak{g}} is the \textit{state} of the system,
\item \m{\big(v(t),w(t)\big) \in \mathfrak{g}\times \mathfrak{g}} is a vector-valued \textit{white-noise Gaussian process},
\item \m{y(t) \in G} is the \textit{output} of the system,
\item the smooth map \m{\R \times \mathfrak{g}\ni (t,\xi) \mapsto f(t,\xi) \in \mathfrak{g}} defines the \textit{dynamics evolution} in the Lie algebra,
\item and \m{\mathfrak{g} \ni v \mapsto \Exp^v\in G} is the \textit{exponential map}.  
\end{enumerate}
The filtering objective is to design a state estimator of the system \eqref{eq:LieDyn} under the influence of white-noise. We consider that the measurement noise \m{v} and the system noise \m{w} are uncorrelated, zero mean, white noise.   
\begin{assumption}\label{asm:Noise}
The white-noise Gaussian processes \m{v} and \m{w} are mean zero, i.e.,
\[E[v(t)] = 0, \quad  E[w(t)] = 0 \quad \text{for all \;} t,\]
with covariance 
\begin{align*}
 \cov[v(t);v(\tau)] &= \delta(t-\tau)R(t),\\
\cov[w(t);w(\tau)] &= \delta(t-\tau)Q(t),
 \end{align*}
where \m{\delta} is the Dirac delta function,
\m{Q(t)\in L(\mathfrak{g},\mathfrak{g})} is a  symmetric, positive semidefinite operator, and 
\m{R(t)\in L(\mathfrak{g},\mathfrak{g})} is a   symmetric, positive definite operator.   The three signals $v(t)$, $w(t)$ and $(g(t_0), \xi(t_0))$ are mutually independent for all $t$, where $t_0$ is an initial time of interest.
\end{assumption}

\section{Invariant extended Kalman Filter}\label{sec:IEKF}
An objective of a filtering technique is to estimate the state of the system using the system output that is available for measurements. For mechanical systems evolving on Lie groups, the desirable features of a filter are to preserve the configuration manifold and possess a symmetric structure that ensures the filter error dynamics to be independent of the group configuration. A crucial advantage of such invariant filters is that the estimated state evolves on the configuration manifold and error bounds obtained for convergence of the nonlinear filter are independent of the group configuration. 

An invariant extended Kalman filter for the system \eqref{eq:LieDyn} is defined as   
\begin{subequations}\label{eq:KalmanFilter}
\begin{equation}\label{eq:KalmanDyn}
\begin{aligned}
& \dot{h}(t) = h(t)\Big(\eta(t) - K_G(t) \pi_{\mathfrak{g}}\big(y(t)^{-1}h(t)-I\big)\Big),\\ 
& \dot{\eta}(t) = f\left(t,\eta(t)\right) - K_{\mathfrak{g}}(t) \pi_{\mathfrak{g}} \big(y(t)^{-1}h(t)-I\big),
\end{aligned}
\end{equation}
with the Kalman gains
\begin{equation}
\label{eq:KalmanGains}
K(t) \Let \begin{pmatrix}K_G(t) \\ K_{\mathfrak{g}}(t) \end{pmatrix} = \Sigma(t)  C^*R(t)^{-1},
\end{equation}
and the associated differential Riccati equation 
\begin{equation}\label{eq:KalmanRiccati}
\begin{aligned}
\dot{\Sigma}(t) =	&A(t)\Sigma(t) + \Sigma(t) A(t)^* + B Q(t) B^*  \\
			&  - \Sigma(t)C^{*}R(t)^{-1}C\Sigma(t),
\end{aligned} 
\end{equation}
\end{subequations}
where 
\begin{enumerate}[label={(\ref{eq:KalmanFilter}-\alph*)},leftmargin=*]
\item \m{\left(h(t), \eta(t)\right) \in G \times \mathfrak{g}} is the \textit{estimated state} of the system,
\item \m{y(t) \in G} is the \textit{measured output} of the system,
\item the linear maps 
\begin{align*}
&\mathfrak{g}^2 \ni (\alpha,\beta) \mapsto A(t)\cdot (\alpha,\beta) = \begin{pmatrix} -\ad_{\eta(t)}(\alpha) + \beta \\ \frac{\partial f}{\partial \eta} \big(t,\eta(t)\big) \beta \end{pmatrix} \in \mathfrak{g}^2,\\
&\mathfrak{g} \ni \beta \mapsto B \cdot \beta \Let (0,\beta) \in \mathfrak{g}^2,\\
&\mathfrak{g}^2 \ni (\alpha,\beta) \mapsto C \cdot (\alpha,\beta) \Let \alpha \in \mathfrak{g}.
\end{align*}
\end{enumerate}

\begin{remark}
We would like to highlight the fact that the IEKF \eqref{eq:KalmanFilter} is motivated by the conventional EKF on Euclidean spaces. In case we consider the Lie group $ G = \R^n $ with vector addition as a group operation, instead of a matrix Lie group, then the IEKF is identical to the EKF on $ \R^n$.      
\end{remark}

In the next section, we derive a coordinate independent description of the Kalman gains \eqref{eq:KalmanGains} and the associated differential Riccati equation \eqref{eq:KalmanRiccati} (the dynamics of the error covariance \m{\Sigma}) using \textit{tensor algebra} \cite{greub}.

\section{Differential Riccati Equation}\label{sec:DRE}
\subsection{Error dynamics}
Let us define a left invariant error\footnote{Note that under the left action \(G \times \left(G\times G\right) \ni \left(q,(p_1,p_2)\right) \mapsto \Phi_q(p_1,p_2) \Let(qp_1,qp_2) \in G \times G, \) the error $E(g,h) \Let g^{-1}h$ is left invariant, i.e. $E\left(\Phi_q(g,h)\right) = E\left(g,h\right)$ for all $q \in G$.} between the state of the system \eqref{eq:LieDyn} and the state of the IEKF \eqref{eq:KalmanFilter} as
\begin{align}\label{eq:LeftError}
\R \ni t \mapsto \begin{pmatrix} E(t) \\ e(t) \end{pmatrix} \Let \begin{pmatrix} g(t)^{-1} h(t)\\ \eta(t)-\xi(t) \end{pmatrix} \in G \times \mathfrak{g}.
\end{align}
Then the error dynamics is given by
\begin{equation*}
\begin{aligned}
\dot{E}(t)=& g(t)^{-1} \dot{h}(t) - g(t)^{-1} \dot{g}(t) g(t)^{-1} h(t),\\
\dot{e}(t)=& \dot{\eta}(t) - \dot{\xi}(t), 
\end{aligned}
\end{equation*}
which simplifies by substituting the system dynamics \eqref{eq:LieDyn} and the filter dynamics \eqref{eq:KalmanDyn} to
\begin{subequations}\label{eq:ErrorDyn}
\begin{align}
\dot{E}(t)=& - E(t)K_G(t) \pi_{\mathfrak{g}}\big(\Exp^{-v(t)}E(t)-I\big) + [E(t),\eta(t)]\nonumber\\
	   & + e(t)E(t),\label{eq:ErrorDynG} \\
\dot{e}(t)=& - K_{\mathfrak{g}}(t) \pi_{\mathfrak{g}} \big(\Exp^{-v(t)}E(t)-I\big)-f\left(t,\eta(t)-e(t)\right)\nonumber \\
           &+ f\left(t,\eta(t)\right) -w(t),\label{eq:ErrorDynA}
\end{align}
\end{subequations}
where the map 
\[\R^{n \times n} \times \R^{n \times n} \ni (\alpha,\beta) \mapsto [\alpha,\beta] \Let \alpha \beta - \beta \alpha \in \R^{n\times n}\]
 is the matrix commutator. 
\begin{remark}
A key advantage of the IEKF is that the error dynamics \eqref{eq:ErrorDynG} is independent of the state \m{h(t)}, and therefore, the convergence region of the dynamics \eqref{eq:ErrorDynG} is independent of the group configuration \m{h(t)}. In other words, the region of convergence of the dynamics \eqref{eq:ErrorDynG} are uniformly defined due to the left group symmetry of the error \eqref{eq:LeftError}, left invariance properties of the IEKF dynamics \eqref{eq:KalmanDyn} and the plant dynamics \eqref{eq:LieDyn}. 
\end{remark}
In order to linearize the error dynamics \eqref{eq:ErrorDyn} around \m{(I,0) \in G \times \mathfrak{g}}, let us choose a smooth trajectory 
\[ \R \ni \epsilon \mapsto \big(E^{\epsilon}(t), e^{\epsilon}(t),v^{\epsilon}(t),w^{\epsilon}(t) \big) \in  G \times \mathfrak{g}^3 \]
that satisfies the error dynamics \eqref{eq:ErrorDyn} for all $\epsilon \in \R$, in addition to
\[\big(E^0(t), e^0(t), v^0(t),w^0(t) \big) = (I,0,0,0) \in G \times \mathfrak{g}^3\]
and 
\[\left.\frac{d}{d\epsilon}\right|_{\epsilon=0} \big(E^{\epsilon}(t), e^{\epsilon}(t),v^{\epsilon}(t),w^{\epsilon}(t) \big) = \big(\Delta E(t),\Delta e(t), v(t),w(t)\big).\] 
	Therefore, the linearized error dynamics for the system \eqref{eq:ErrorDyn} is given by
\begin{equation}\label{eq:ErrorLinDyn}
\dot{\Delta}(t) = \big(A(t)-K(t)C\big)\Delta(t) - B w(t) + K(t)v(t)
\end{equation}
where
\begin{align*}
&A(t) = \begin{pmatrix} -\ad_{\eta(t)} & I \\ 0 & \frac{\partial f}{\partial \eta} \big(t,\eta(t)\big) \end{pmatrix}, \; B = \begin{pmatrix} 0 \\ I \end{pmatrix}, \;  C = \begin{pmatrix} I & 0 \end{pmatrix}, \\ 
&\Delta(t) \Let \begin{pmatrix}\Delta E(t)\\ \Delta e(t) \end{pmatrix}, \; K(t) \Let \begin{pmatrix} K_G(t) \\ K_{\mathfrak{g}}(t)\end{pmatrix},  \; \text{and\;} \ad_{\eta}(\alpha)  \Let [\eta, \alpha]. 
\end{align*}
In the extended Kalman filter framework, the linear error dynamics is utilized to define the performance measure of the filter \cite{reif2000}. In particular, we design the Kalman gains \eqref{eq:KalmanGains} for the Kalman filter \eqref{eq:KalmanFilter} that minimize the covariance of the process \m{\Delta} whose dynamics is given by \eqref{eq:ErrorLinDyn}.  

\subsection{Minimum variance estimates}
In this section we derive Kalman gains by minimizing the variance of the linearized error process \m{\Delta}, and establish the associated differential Riccati equation. Note that we adopt a generic framework for deriving the differential Riccati equation on the Lie algebra \m{\mathfrak{g}} in which the Lie algebra is not identified with an Euclidean space.    

	Before we proceed with the derivation, let us recapitulate some key notions on isomorphisms related to tensor algebra; refer to \cite{greub} for more detail. 
\begin{fact}[{{\cite[Section 1.28]{greub}}}]\label{fact:Trace}
The tensor product \m{V\otimes W} of two vector spaces $V$ and $W$ is identified with \m{L(V,W)}, the space of linear maps from \m{V} to \m{W}, by the transformation \m{T:V\otimes W \rightarrow L(V,W)} as 
\[ 
T(a\otimes b)x = \ip{a}{x}b \quad \text{for all\;} x \in V,\; a \in V \text{\;and \;} b \in W.
\] 
Further, for any \m{S \in L(W,W), R \in L(V,V), a \in V \text{ and } b \in W,}  
\[
S\circ T(a\otimes b) = T(a\otimes S b) \;\text{ and } \; T(a\otimes b)\circ R = T(R^{*} a\otimes b),   
\]
where $\circ$ is the composition operator. In case \m{W=V}, the trace operator is defined as 
\[
L(V,V) \ni T(a \otimes b) \mapsto \tr\big(T(a\otimes b)\big) = \ip{a}{b}. 
\]
\end{fact}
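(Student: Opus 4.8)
The plan is to verify each assertion first on the simple tensors $a \otimes b$, which span $V \otimes W$, and then extend by linearity; this is legitimate because $T$, composition with $S$ or $R$, and $\tr$ are all linear in the tensor argument. First I would check that $T$ is well defined: the assignment $(a,b) \mapsto (x \mapsto \ip{a}{x} b)$ is bilinear in $(a,b)$, so by the universal property of the tensor product it factors uniquely through a linear map $T : V \otimes W \to L(V,W)$. Since $V$ and $W$ are finite dimensional, $\dim(V\otimes W) = \dim V \cdot \dim W = \dim L(V,W)$, so to conclude that $T$ is an isomorphism it suffices to send a basis to a basis: taking orthonormal bases $\{e_i\}$ of $V$ and $\{f_j\}$ of $W$, the tensors $e_i \otimes f_j$ map to the rank-one operators $x \mapsto \ip{e_i}{x} f_j$, which form a basis of $L(V,W)$.

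For the two composition identities I would compute directly on a simple tensor and an arbitrary $x \in V$. For the first, $\big(S\circ T(a\otimes b)\big)x = S\big(\ip{a}{x} b\big) = \ip{a}{x}\, Sb = T(a\otimes Sb)x$, using only linearity of $S$. For the second, $\big(T(a\otimes b)\circ R\big)x = T(a\otimes b)(Rx) = \ip{a}{Rx}\, b = \ip{R^* a}{x}\, b = T(R^* a \otimes b)x$, where the middle equality is the defining property of the adjoint $R^*$ with respect to the chosen inner product. Since these hold for every $x$, the operators agree, and linearity in the tensor slot promotes the identities to all of $V \otimes W$.

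For the trace formula, specialize to $W = V$ and fix an orthonormal basis $\{e_i\}$ of $V$; then $\tr(T(a\otimes b)) = \sum_i \ip{e_i}{T(a\otimes b)e_i} = \sum_i \ip{a}{e_i}\ip{e_i}{b} = \ip{a}{b}$, the last equality being the orthonormal expansion $a = \sum_i \ip{a}{e_i} e_i$. None of these steps is deep, so I expect the only points requiring genuine care to be the well-definedness of $T$, for which the universal property is the clean tool rather than an ad hoc basis argument, and the inner-product conventions: the adjoint identity $\ip{a}{Rx} = \ip{R^* a}{x}$ and the expansion above are the real, symmetric ones induced from $\R^{n\times n}$, so no conjugation bookkeeping enters, and the trace computation is valid because $\tr$ is already a basis-independent functional on $L(V,V)$, which makes the value $\ip{a}{b}$ independent of the orthonormal basis chosen.
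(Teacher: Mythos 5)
Your proof is correct. Note, however, that the paper does not prove this statement at all: it is presented as a Fact and attributed to \cite[Section 1.28]{greub}, so the comparison here is between your argument and a citation. What you supply is the standard self-contained verification, and each step holds: the universal property legitimately produces a well-defined linear $T$ from the bilinear assignment $(a,b)\mapsto\big(x\mapsto \ip{a}{x}b\big)$; the dimension count together with the image of a product orthonormal basis shows $T$ is an isomorphism, which justifies the word ``identified''; the two composition identities follow by exactly the pointwise computation you give, with the adjoint identity $\ip{a}{Rx}=\ip{R^*a}{x}$ carrying the second one; and the trace computation via orthonormal expansion is valid because the inner products in this paper are real and symmetric (induced from $\R^{n\times n}$), so no conjugation issues arise. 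One small conceptual difference from the cited source: in Greub the trace is \emph{defined} intrinsically as the linear functional on $L(V,V)\cong V\otimes V$ induced by the pairing $(a,b)\mapsto\ip{a}{b}$, whereas you take the ordinary operator trace as given and verify that it returns $\ip{a}{b}$ on rank-one operators. In finite dimensions the two are equivalent, and your version is arguably the more useful one for this paper, since in Appendix A the symbol $\tr$ is manipulated as the usual operator trace (linear, basis-independent, commuting with expectation) in deriving $J(\Delta)=\ip{M}{\Sigma(\tau)}$.
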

For the sake of clarity, with a slight abuse of notation, we denote both the linear operator \m{T(a \otimes b)} and the corresponding tensor \m{(a \otimes b)} by \m{(a\otimes b)} as it is very well understood from the context. 

	Let us derive Kalman gains and the associated differential Riccati equation by minimizing variance of the process \m{\Delta}.  The variance minimization problem is defined in optimal control framework as 
\begin{equation}\label{eq:FilterOpt}
\begin{aligned}
\minimize_{K}\,& J (\Delta) \Let E\left[\ip{\Delta(\tau)}{M \Delta(\tau)}\right]\\
\text{subject to}
&\begin{cases}
\text{error dynamics }\eqref{eq:ErrorLinDyn}\; \text{ for } t_0 \leq t \leq \tau,\\
\text{boundary condition }\Delta(t_0) \sim \mathcal{N}(0,\Sigma_0),\\
\tau > t_0 \text{ is fixed.}
\end{cases}
\end{aligned}
\end{equation}
where \m{M, \Sigma_0 \in L(\mathfrak{g}^2,\mathfrak{g}^2)} are symmetric, positive definite, linear  operators on $\mathfrak{g}^2$.

Note that the optimal control problem \eqref{eq:FilterOpt} is in stochastic setting, and it can be reformulated as a deterministic optimal control problem as follows:  
\begin{theorem}\label{thm:CovDyn}
Let 
\[
\R \ni t \mapsto \Sigma(t) \Let E\left[\Delta(t)\otimes \Delta(t)\right] \in L(\mathfrak{g}^2,\mathfrak{g}^2)
\]
be the covariance of the process \m{\Delta}. If Assumption \ref{asm:Noise} holds, then
 the process \m{\Delta} satisfies
\begin{equation}\label{eq:CovDyn}
\begin{aligned}
\dot{\Sigma}(t) = 	&\big(A(t)-K(t)C\big)\Sigma(t) + \Sigma(t) \big(A(t)-K(t)C\big)^* \\
			& + B Q(t) B^* + K(t) R(t) K(t)^*,
\end{aligned} 
\end{equation}
and the cost function \m{J} in  \eqref{eq:FilterOpt} translates to 
\[
J \big(\Sigma(\tau)\big) = \ip{M}{\Sigma(\tau)}.
\]
Furthermore, the optimal control problem \eqref{eq:FilterOpt} is equivalent to the following problem:
\begin{equation}\label{eq:CovOpt}
\begin{aligned}
\minimize_{K}\,&  \ip{{M}}{\Sigma(\tau)}\\
\text{subject to}
&\begin{cases}
\text{dynamics }\eqref{eq:CovDyn}\;\text{ for } t_0 \leq t \leq \tau,\\
\text{boundary condition }\Sigma(t_0) = \Sigma_0,\\
\tau > t_0 \text{ is a fixed given time.}
\end{cases}
\end{aligned}
\end{equation}
\begin{proof}
See Appendix A.
\end{proof}
\end{theorem}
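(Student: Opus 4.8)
The plan is to prove the statement in three stages: translate the quadratic cost into a trace pairing, derive the Lyapunov-type propagation law \eqref{eq:CovDyn} for $\Sigma$, and then read off the equivalence of \eqref{eq:FilterOpt} and \eqref{eq:CovOpt}. Throughout I abbreviate $F(t) \Let A(t)-K(t)C$ and the noise intensity $W(t) \Let BQ(t)B^{*}+K(t)R(t)K(t)^{*} \in L(\mathfrak{g}^{2},\mathfrak{g}^{2})$. For the cost translation I would invoke Fact~\ref{fact:Trace}: since $M\circ\big(\Delta(\tau)\otimes\Delta(\tau)\big) = \Delta(\tau)\otimes M\Delta(\tau)$ and $\tr\big(a\otimes b\big)=\ip{a}{b}$, we have $\ip{\Delta(\tau)}{M\Delta(\tau)} = \tr\big(M\circ(\Delta(\tau)\otimes\Delta(\tau))\big)$. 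Taking expectations and interchanging $E$ with the linear trace operator yields $J(\Delta) = \tr\big(M\,\Sigma(\tau)\big) = \ip{M}{\Sigma(\tau)}$, where the last equality uses symmetry of $M$ and the trace pairing on $L(\mathfrak{g}^{2},\mathfrak{g}^{2})$. This part needs nothing beyond Fact~\ref{fact:Trace} and linearity.

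For the covariance dynamics I would introduce the state-transition operator $\Phi(t,s)\in L(\mathfrak{g}^{2},\mathfrak{g}^{2})$ of the homogeneous part of \eqref{eq:ErrorLinDyn}, characterized by $\tfrac{\partial}{\partial t}\Phi(t,s)=F(t)\Phi(t,s)$ and $\Phi(s,s)=\mathrm{Id}$, and write the mild solution
\[
\Delta(t) = \Phi(t,t_0)\Delta(t_0) + \int_{t_0}^{t}\Phi(t,s)\big(K(s)v(s)-Bw(s)\big)\,ds.
\]
Forming $\Sigma(t)=E[\Delta(t)\otimes\Delta(t)]$ and expanding by bilinearity of $\otimes$, Assumption~\ref{asm:Noise} does the decisive work: zero means and mutual independence of $\Delta(t_0)$, $v$ and $w$ annihilate every cross term between the initial condition and the noise as well as the $v$--$w$ cross terms, while the delta-correlations $\cov[w(t);w(\tau)]=\delta(t-\tau)Q(t)$ and $\cov[v(t);v(\tau)]=\delta(t-\tau)R(t)$ collapse the surviving double integral onto its diagonal, leaving
\[
\begin{aligned}
\Sigma(t) ={}& \Phi(t,t_0)\,\Sigma_0\,\Phi(t,t_0)^{*}\\
&+ \int_{t_0}^{t}\Phi(t,s)\,W(s)\,\Phi(t,s)^{*}\,ds.
\end{aligned}
\]
Differentiating this identity in $t$ by the Leibniz rule, using $\tfrac{\partial}{\partial t}\Phi(t,s)=F(t)\Phi(t,s)$ and $\Phi(t,t)=\mathrm{Id}$, the boundary term contributes exactly $W(t)=BQ(t)B^{*}+K(t)R(t)K(t)^{*}$ and the differentiated factors reassemble into $F(t)\Sigma(t)+\Sigma(t)F(t)^{*}$, which is precisely \eqref{eq:CovDyn}.

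Finally, combining the two stages, the cost in \eqref{eq:FilterOpt} depends on the law of $\Delta(\tau)$ only through $\Sigma(\tau)$, whose evolution is governed deterministically by \eqref{eq:CovDyn} with $\Sigma(t_0)=\Sigma_0$, and the decision variable $K$ is common to both problems; hence \eqref{eq:FilterOpt} and \eqref{eq:CovOpt} are equivalent. I expect the main obstacle to be the rigorous handling of the white-noise terms, namely establishing that the double integral collapses to the \emph{full} intensity $W$ rather than to a fractional multiple of it. I would resolve this by substituting the delta-correlated covariances of Assumption~\ref{asm:Noise} directly inside the double integral over $[t_0,t]^{2}$, where the Dirac mass lies on the diagonal in the \emph{interior} of the square and is therefore captured with unit weight; this also explains why the integral representation is preferable to differentiating $E[\Delta\otimes\Delta]$ term by term, where the singularity would instead sit on the boundary of the time domain and a careful accounting of the endpoint would be required. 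All remaining operations are linear and pass through the expectation, the trace, and the time derivative without difficulty.
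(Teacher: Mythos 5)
Your proposal is correct, but the heart of it --- the derivation of \eqref{eq:CovDyn} --- takes a genuinely different route from the paper's. The paper differentiates first and integrates second: it writes $\dot{\Sigma} = E[\dot{\Delta}\otimes\Delta + \Delta\otimes\dot{\Delta}]$, substitutes \eqref{eq:ErrorLinDyn}, and is then left with the noise--state cross-correlations $E[w(t)\otimes\Delta(t)]$, $E[v(t)\otimes\Delta(t)]$, which it evaluates in a separate lemma (Lemma \ref{lem:NoiseCov}) via the mild solution; there the Dirac mass sits at the endpoint $\tau=t$ of the integration interval $[t_0,t]$ and is assigned weight $\tfrac{1}{2}$, and the resulting factors of $\tfrac{1}{2}$ recombine to the full intensities $BQB^*$ and $KRK^*$ only because each noise enters through two symmetric terms. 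You instead integrate first and differentiate second: you form the closed-form variation-of-constants representation of $\Sigma(t)$ itself, in which the delta-correlations act on the interior of the square $[t_0,t]^2$ and are therefore captured with unit weight, and then apply the Leibniz rule. As you correctly observe, this sidesteps the endpoint-delta convention entirely, which is the main technical delicacy of the paper's route; the price is having to justify differentiation under the integral sign, which is routine here. What the paper's route buys is modularity and brevity of the final computation, since the cross-correlation formulas of Lemma \ref{lem:NoiseCov} are packaged once and reused. One small point to tighten in your write-up: when you pass from $E\bigl[(\Phi\Delta_0)\otimes(\Phi\Delta_0)\bigr]$ and the analogous noise terms to the composition forms $\Phi\,\Sigma_0\,\Phi^*$ and $\Phi\, W\,\Phi^*$, this is not mere bilinearity of $\otimes$; you are using the identity $(X\otimes Y)Z = Y\circ Z\circ X^*$ under the identification of tensors with operators, which the paper isolates as Lemma \ref{lem:Operator}. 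You should invoke that identity explicitly (or verify it on rank-one tensors) at that step. Your cost translation via Fact \ref{fact:Trace} and the final equivalence argument coincide with the paper's.
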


\begin{corollary}
The optimal solution to the optimal control problem \eqref{eq:CovOpt} is given by
\begin{equation}\label{K:gain}
K(t) \Let \Sigma(t)C^*R(t)^{-1},
\end{equation}
where \m{\Sigma(t)} is the solution to the differential Riccati equation
\begin{equation}\label{eq:CovDyn:Riccati}
\begin{aligned}
\dot{\Sigma}(t) =	&A(t)\Sigma(t) + \Sigma(t) A(t)^* + B Q(t) B^*  \\
			&  - \Sigma(t)C^{*}R(t)^{-1}C\Sigma(t),
\end{aligned} 
\end{equation}
with $\Sigma(t_0) = \Sigma_0$.
\begin{proof}
It can be proved in the same way as in  \cite{athans}, so the proof is omitted.
\end{proof}
\end{corollary}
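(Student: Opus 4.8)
The plan is to treat \eqref{eq:CovOpt} as a Mayer-type optimal control problem in the operator space $L(\mathfrak{g}^2,\mathfrak{g}^2)$, with state $\Sigma$, control $K$, terminal cost $\ip{M}{\Sigma(\tau)}$ and no running cost, and to apply the Pontryagin necessary conditions (exactly as in \cite{athans}) adapted to this coordinate-free setting. First I would introduce a symmetric costate operator $\Lambda(t) \in L(\mathfrak{g}^2,\mathfrak{g}^2)$ and form the control Hamiltonian
\begin{equation*}
\mathcal{H}(\Sigma,K,\Lambda) = \ip{\Lambda}{(A-KC)\Sigma + \Sigma(A-KC)^* + BQB^* + KRK^*}.
\end{equation*}
The associated necessary conditions are the costate equation $\dot{\Lambda} = -\partial_\Sigma\mathcal{H} = -(A-KC)^*\Lambda - \Lambda(A-KC)$ with terminal condition $\Lambda(\tau) = M$ (coming from the terminal cost and the symmetry of $M$), together with the stationarity condition $\partial_K\mathcal{H} = 0$.

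Next I would carry out the variation with respect to $K$ directly in the operator space, using the tensor-product identification of Fact \ref{fact:Trace} and the adjoint relations $S\circ T(a\otimes b) = T(a\otimes Sb)$ and $T(a\otimes b)\circ R = T(R^* a\otimes b)$ to differentiate the inner products and traces. Collecting the terms linear in $\delta K$ and using the symmetry of $\Lambda$, $\Sigma$ and $R$, the stationarity condition reduces to $\Lambda(KR - \Sigma C^*) = 0$. The crucial step is to argue that $\Lambda(t)$ is invertible on $[t_0,\tau]$: since $\Lambda$ solves the homogeneous Lyapunov-type equation above with terminal data $M$ positive definite, it is a congruence transform of $M$ by the invertible state-transition operator associated with $A-KC$, hence positive definite throughout. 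Cancelling $\Lambda$ and using that $R$ is invertible then yields the optimal gain $K(t) = \Sigma(t)C^*R(t)^{-1}$, which is \eqref{K:gain}.

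It then remains to substitute this gain into the covariance dynamics \eqref{eq:CovDyn}. Expanding $(A - \Sigma C^* R^{-1}C)\Sigma + \Sigma(A - \Sigma C^* R^{-1}C)^* + BQB^* + \Sigma C^* R^{-1} R R^{-1} C \Sigma$ and cancelling one of the three identical quadratic terms $\Sigma C^* R^{-1}C\Sigma$ (again using the symmetry of $\Sigma$ and $R$) produces exactly the differential Riccati equation \eqref{eq:CovDyn:Riccati} with $\Sigma(t_0) = \Sigma_0$. Finally, to confirm that this stationary point is the minimizer rather than a saddle, I would check the second-order condition: the Hessian of $\mathcal{H}$ in $K$ is governed by $\Lambda$ and $R$, both positive definite, so $\mathcal{H}$ is strictly convex in $K$ and the candidate is the global minimum of the Hamiltonian; sufficiency for the Mayer problem then follows as in \cite{athans}.

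The main obstacle I anticipate is not the algebra but the two structural points that make the coordinate-free operator formulation rigorous: performing the variation $\partial_K\mathcal{H}$ correctly in $L(\mathfrak{g}^2,\mathfrak{g}^2)$ through the tensor identifications of Fact \ref{fact:Trace}, and establishing the invertibility (positive definiteness) of the costate $\Lambda$, so that the stationarity relation $\Lambda(KR - \Sigma C^*) = 0$ can legitimately be reduced to $KR = \Sigma C^*$.
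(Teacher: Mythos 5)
Your proposal is correct and follows essentially the same route as the paper, whose proof is deliberately omitted with a citation to \cite{athans}: you reconstruct exactly that matrix-minimum-principle argument (trace-inner-product Hamiltonian with symmetric costate $\Lambda$, stationarity $\Lambda(KR-\Sigma C^*)=0$, positive definiteness of $\Lambda$ via the transition-operator congruence with $M$, and substitution of the gain into \eqref{eq:CovDyn} to obtain \eqref{eq:CovDyn:Riccati}), merely transplanted to the operator space $L(\mathfrak{g}^2,\mathfrak{g}^2)$ via Fact \ref{fact:Trace}. The algebraic steps you flag (the $K$-variation through the tensor identifications and the cancellation of the quadratic terms) all check out.
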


\section{Rigid body attitude control}\label{sec:RigidBody}
Consider the attitude dynamics of a fully actuated rigid body:
\begin{equation}\label{eq:RigidDyn}
\begin{aligned}
&\dot{X}(t) = X(t)\widehat{\Omega}(t)\\
&\dot{\Omega}(t) = \mathbb{I}^{-1}\big(\mathbb{I}\Omega(t)\times \Omega(t) + u(t)\big)+w(t)\\
&Y(t) = X(t)\Exp^{v(t)},
\end{aligned}
\end{equation}
where \m{\big(X(t),\Omega(t)\big) \in \text{SO}(3) \times \R^3} defines the state of the system with \m{X(t)} represents orientation and \m{\Omega(t)} represents angular velocity of the rigid body at time \m{t}, \m{w(t)\in \R^3} is the system noise, \m{v(t)\in \mathfrak{so}(3)} is the measurement noise, \m{u(t)\in\R^3} is the control torque, \m{\mathbb{I}\in\R^{3\times3}} is the moment of inertia of the rigid body, and \m{Y(t)\in \text{SO}(3)} is the output of the system; the {\it hat} map \m{\R^3 \ni x \mapsto \hat{x} \in \mathfrak{so}(3)} is a vector space isomorphism such that $\hat x y  =x \times y$ for all $x,y  \in \R^3$, with \m{\mathfrak{so}(3)} is the Lie algebra of the Lie group \m{\text{SO}(3)}. Further, the inverse of the hat map is denoted by $\mathfrak{so}(3) \ni x \mapsto \pi_{\R^3}(x) \in \R^3.$ 

	We consider the fact that the manifold \m{\text{SO}(3)} is embedded into \m{\R^{3\times 3}} which ensures the existence of unique orthogonal projection map 
\[
\R^{3 \times 3} \ni x \mapsto \pi_{\mathfrak{so}(3)} (x) \in \mathfrak{so}(3)
\] 
with respect to the Euclidean inner product on \m{\R^{3\times 3}}. 
Let \m{\left(Z(t), \omega(t)\right) \in \text{SO}(3) \times \R^3} be the \textit{estimated state} of the system, \m{R(t)}  be the covariance of the output noise \m{v(t)}, and \m{Q(t)}  be the covariance of the system noise \m{w(t)}. 
\subsection{Invariant extended Kalman filter}
	Then the invariant extended Kalman filter for the system is given by
\begin{subequations}\label{eq:KalmanFilterRigidBody}
\begin{equation}
\begin{aligned}
 \dot{Z}(t) = & Z(t)\Big(\hat{\omega}(t) - K_G(t) \pi_{\mathfrak{so}(3)}\big(Y(t)^{-1}Z(t)-I\big)\Big),\\ 
\dot{\omega}(t) = & \mathbb{I}^{-1}\big(\mathbb{I}\omega(t)\times \omega(t) + u(t)\big)\\
&- K_{\mathfrak{g}}(t) \big(\pi_{\R^3} \circ \pi_{\mathfrak{so}(3)}\big)\big(Y(t)^{-1}Z(t)-I\big),
\end{aligned}
\end{equation}
with the Kalman gains
\begin{equation}
\label{eq:KalmanGainsRigidBody}
K(t) \Let \begin{pmatrix}K_G(t) \\ K_{\mathfrak{g}}(t) \end{pmatrix} =\Sigma(t)  C^{*}R^{-1}(t),
\end{equation}
and the covariance trajectory 
\[\R \ni t \mapsto \Sigma(t) \in L\Big(\big(\mathfrak{so}(3),\R^3\big),\big(\mathfrak{so}(3),\R^3\big)\Big)\] 
satisfies the dynamics
\begin{equation}
\label{eq:CovDynRigidBody}
\begin{aligned}
\dot{\Sigma}(t) =	&A(t)\Sigma(t) + \Sigma(t) A(t)^* + B Q(t) B^*  \\
			&  - \Sigma(t)C^{*}R(t)^{-1}C\Sigma(t),
\end{aligned} 
\end{equation}
\end{subequations}
where 
\begin{align*}
&A(t) = \begin{pmatrix} -\ad_{\hat{\omega}(t)} & I \\ 0 & \mathbb{I}^{-1}\big( \widehat{\mathbb{I}\omega(t)} - \widehat{\omega(t)} \mathbb{I}  \big) \end{pmatrix}, \quad B = \begin{pmatrix} 0 \\ I \end{pmatrix}, \\ 
&C = \begin{pmatrix} I & 0 \end{pmatrix}, \quad \text{and } \; \ad_{\eta}(\alpha)  = \eta \alpha - \alpha \eta.
\end{align*}

\subsection{Extended Kalman filter}
The conventional extended Kalman filter (EKF) is implemented in two steps:
\begin{enumerate}
\item First, the rigid body dynamics \eqref{eq:RigidDyn} is embedded into an Euclidean space $\R^{3\times 3} \times \R^3$. Note that the system dynamics \eqref{eq:RigidDyn} is naturally extended from $\text{SO}(3) \times \R^3$ to the Euclidean space $\R^{3 \times 3} \times \R^3 $. 
\item Second, the output map of the embedded dynamics, i.e., 
\begin{equation}\label{eq:RigidOutput}
\mathfrak{so}(3) \ni v \mapsto Y \Let X \Exp^{v(t)} \in \R^{3 \times 3},
\end{equation}
needs to be a local diffeomorphism around $0$. Therefore, we need to approximate the output as
\begin{equation}\label{eq:RigidOutputApp}
\R^{3\times 3} \ni v \mapsto Y \Let X \Exp^{v} \in \R^{3 \times 3},
\end{equation}
such that it is a local diffeomorphism.  
\end{enumerate}
Then the rigid body dynamics \eqref{eq:RigidDyn} with the output \eqref{eq:RigidOutputApp} is considered for designing the extended Kalman filter. 
\begin{remark}
In numerical experiments, the rigid body dynamics \eqref{eq:RigidDyn} is employed to generate the output, and the invariant extended Kalman filter and the conventional extended Kalman filter are employed for estimating the states of the system \eqref{eq:RigidDyn}.  
\end{remark}

\subsection{Numerical results}
We have implemented the invariant extended Kalman filter \eqref{eq:KalmanFilterRigidBody} and a conventional extended Kalman filter for the attitude dynamics. The following parameters have been considered for numerical experiments:
\begin{enumerate}
\item The moment of inertia matrix of the rigid body: \[\mathbb{I}  = \text{diag} (4.250, 4.337,3.664).\]
\item System noise covariance: \m{Q(t) = 2 I \;\text{ for all } t.}
\item Measurement noise covariance: \m{R(t) = 0.3 I \;\text{ for all } t.}
\item Filter initial states: \m{Z(0) = I, \omega(0) = \big(2.1, 0.4, 1.2\big)^\top.}
\item System initial states: \m{X(0) = \Exp^{\text{v}_0}, \Omega(0) = \big(2, 0, 1\big)^\top + \text{w}_0,} where \m{\text{v}_0 \sim \mathcal{N}(0,0.06 I)} and \m{\text{w}_0 \sim \mathcal{N}(0,0.4 I)}.
\item Reference control trajectory: 
	\[\R \ni t\mapsto u(t) \Let \mathbb{I} \dot{\gamma}(t) - \big(\mathbb{I}\gamma(t)\big)\times \gamma(t) \in \R^3,\] 
where 
\[\gamma(t) \Let \big(1+\cos(t),\sin(t)-\sin(t)\cos(t),\cos(t)+\sin^2(t)\big)^\top.\]
\end{enumerate}
\begin{remark}
The moment of inertia of the rigid body is taken from a satellite from the European Student Earth Orbiter (ESEO) \cite{Hegrenas} and other parameters of filters are chosen appropriately to demonstrate the filter performance. 
\end{remark}
We conducted ten thousand Monte Carlo runs for both the filters,  keeping the initial state of the filter unaltered, with the computed the mean square error\[
\text{MSE}(t) \Let \frac{1}{N} \sum_{i=1}^{N}\norm{\left(X^{(i)}(t), \Omega^{(i)}(t)\right) - \left(Z^{(i)}(t),\omega^{(i)}(t)\right)}^2,
\]
where $\left(X^{(i)}(t), \Omega^{(i)}(t)\right)$ is the system state and $\left(Z^{(i)}(t), \omega^{(i)}(t)\right)$ is the filter state for the $i$th Monte Carlo run. In the transient phase (see MSE for $0-\SI{2}{\second}$ in Figure \ref{fig:ErrorComparison}), the IEKF filter converges relatively slower than the EKF because the initial large error might not be well approximated on the Lie algebra. However, as the filters reach steady-state, the MSE of the IEKF is less as compared to the EKF as shown in Figure \ref{fig:ErrorComparison}. These numerical experiments demonstrate that the IEKF performs better than the EKF because it has less steady-state MSE which is very much desirable. The numerical experiments are run on a computer with Intel(R) Core(TM) i7-8700 @ 3.20 GH processor using 16 GB of RAM and on the platform, MATLAB R2018b. The filter dynamics of IEKF and EKF are integrated using the Runge-Kutta 4th order algorithm with a fixed step length $\SI{0.02}{\second}$ for a time duration of $\SI{10}{\second}$, and the computation time for a single Monte Carlo run is shown in Figure \ref{fig:ComTime}. Note that  the covariance dynamics $\dot{\Sigma}$  of the IEKF is evolving on $ L ( \R^{3} \times \R^3, \R^3 \times \R^3) $ whereas the covariance dynamics of the EKF is evolving on $ L(\R^{3 \times 3} \times \R^3, \R^{3\times 3} \times \R^3) $, and therefore, the IEKF is computationally less intensive than the EKF.    

\begin{figure}
\centering
\begin{tikzpicture}[every pin/.style={fill=white}]
        \begin{axis}[yscale=0.55,xmin=0,xmax=10,ymin=0.2,ymax=0.5,xstep=1, xlabel=Time $ (\SI{}{\second}) $, ylabel near ticks, ylabel= MSE, x label style={at={(axis description cs:0.5,-0.4)}}]
            \addplot[skyblue, thick,dotted] table[x=Time, y=InvMSE, col sep=comma]{TrajMSE.txt};
            \addplot[redfaded, thick] table[x=Time, y=ExtMSE, col sep=comma]{TrajMSE.txt};

\coordinate (pt) at (axis cs:7.5,0.26);
\end{axis}

\begin{customlegend}[
legend entries={ 
IEKF,
EKF
},
legend style={at={(6.7,2.95)},font=\footnotesize}] 
    \addlegendimage{mark=ball,ball color=skyblue, draw=white}
    \addlegendimage{mark=ball,ball color=redfaded,draw=white}
\end{customlegend}

\node[pin=177:{%
    \begin{tikzpicture}[baseline,trim axis left,trim axis right,scale=0.95]
    \begin{axis}[tiny,xmin=6,xmax=9,ymin=0.2,ymax=0.25,xstep=1, enlargelimits]
            \addplot[skyblue, thick, dotted] table[x=Time, y=InvMSE, col sep=comma]{TrajMSE.txt};
            \addplot[redfaded, thick] table[x=Time, y=ExtMSE, col sep=comma]{TrajMSE.txt};
    \end{axis}
    \end{tikzpicture}%
}] at (pt) {};
\end{tikzpicture}

	\caption{Mean square error comparison of IEKF (dotted blue) and EKF (solid red).}
	\label{fig:ErrorComparison}
\end{figure}
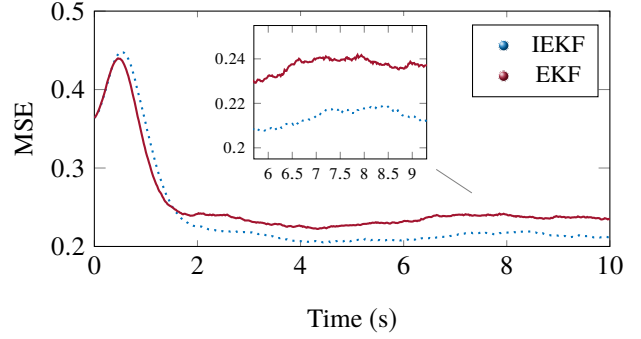 
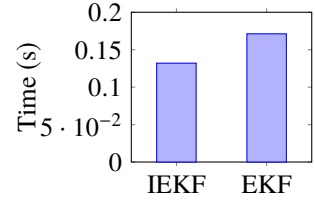
\begin{figure}
\centering
\pgfplotstableread[row sep=\\,col sep=&]{
    Filter & ComTime \\
	0.75& 0.1320\\
    	2.25& 0.1712\\
    }\mydata
\begin{tikzpicture}
        \begin{axis}[yscale=0.35, xscale=0.35, xmin =0, xmax = 3,ymin =0, ymax = 0.2,bar width=15mm,ybar,xtick=data,xticklabels={IEKF,EKF},ylabel shift = 50 pt,ylabel={Time $ (\SI{}{\second}) $}]
            \addplot table[x=Filter, y=ComTime]{\mydata};
        \end{axis}
    \end{tikzpicture}
	\caption{Computation time comparison of IEKF and EKF.}
	\label{fig:ComTime}
\end{figure} 

\begin{ack}                               
Partially supported by the KUSTAR-KAIST Institute, KAIST; by KAIST under grants G04170001 and N11180231; by the Institute for Information \& communications Technology Planning \& Evaluation(IITP) grant funded by the Korea government (MSIT) [2019-0-01396, Development of framework for analyzing, detecting, mitigating of bias in AI model and training data]; and by the ICT R\&D program of MSIP/IITP [2016-0-00563, Research on Adaptive Machine Learning Technology Development for Intelligent Autonomous Digital Companion].
\end{ack}

\bibliography{InvKalmanFilterAutomatica}
\bibliographystyle{siam}
 
\appendix

\section{Proof of Theorem \ref{thm:CovDyn}}
We know that the covariance of the process \m{\Delta} is defined by 
\[
\Sigma(t) \Let E\left[\Delta \otimes \Delta \right] \in L (\mathfrak{g}^2,\mathfrak{g}^2).
\]
Therefore, using the property discussed in Fact \ref{fact:Trace}, we obtain 
\begin{align*}
J(\Delta) & \Let  E\left[\ip{\Delta(\tau)}{M \Delta(\tau)}\right],\\
& = E\left[\tr\big(\Delta(\tau) \otimes M \Delta(\tau) \big) \right],\\
& = E\left[\tr\Big(M\circ \big(\Delta(\tau) \otimes \Delta(\tau) \big) \Big) \right],\\
& = \tr \big(M \Sigma(\tau)\big),\\
& =  \ip{{M}}{\Sigma(\tau)}.
\end{align*} 
We now turn to derive the covariance dynamics as
\begin{align*} 
\dot{\Sigma}= \frac{d}{dt} \Big(E\left[\Delta \otimes \Delta \right]\Big) =	E\left[\dot{\Delta}\otimes \Delta + \Delta\otimes\dot{\Delta}\right],
\end{align*}	
which further simplifies, by substituting the expression of \m{\dot{\Delta}} from \eqref{eq:ErrorLinDyn}, to
\begin{equation}
\begin{aligned}\label{eq:Sigma2}
	\dot{\Sigma}	=	& \big((A-KC)\otimes I\big)\Sigma + \big(I \otimes (A-KC)\big)\Sigma\\
				& - \big(B \otimes I\big)E\left[w\otimes\Delta\right] + \big(K\otimes I\big) E\left[v\otimes \Delta\right]\\
				& - \big(I \otimes B\big)E\left[\Delta\otimes w\right] + \big(I\otimes K\big) E\left[\Delta\otimes v\right]. 
\end{aligned}
\end{equation}
By Lemma \ref{lem:NoiseCov} in Appendix B, the dynamics \eqref{eq:Sigma2} is rewritten as 
\begin{equation}
\begin{aligned}\label{eq:Sigma3}
	\dot{\Sigma}	=	& \big((A-KC)\otimes I\big)\Sigma + \big(I \otimes (A-KC)\big)\Sigma\\
				& + \big(B \otimes B\big)Q + \big(K\otimes K\big)R. 
\end{aligned}
\end{equation}
Subsequently, applying Lemma \ref{lem:Operator} in Appendix B to \eqref{eq:Sigma3} leads to the following covariance dynamics:
\begin{equation}
\dot{\Sigma} = \big(A-KC\big)\Sigma + \Sigma \big(A-KC\big)^* + B Q B^* + K R K^*.
\end{equation}
The equivalence of the optimal problem \eqref{eq:FilterOpt} to \eqref{eq:CovOpt} is now trivial.
This completes the proof of Theorem \ref{thm:CovDyn}.

\section{Auxiliary Lemmas}
\begin{lemma}\label{lem:NoiseCov}
If Assumption \ref{asm:Noise} holds, then the following is true:
\begin{enumerate}
\item \m{E \left[ w(t) \otimes \Delta(t) \right] = -\frac{1}{2}\big(I \otimes B \big) Q(t),} 
\item \m{E \left[ v(t) \otimes \Delta(t) \right] = \frac{1}{2}\big(I \otimes K(t) \big) R(t),} 
\item \m{E \left[ \Delta(t) \otimes w(t) \right] = -\frac{1}{2}\big( B \otimes I \big) Q(t),} 
\item \m{E \left[ \Delta(t) \otimes v(t) \right] = \frac{1}{2}\big( K(t)\otimes I\big) R(t).} 
\end{enumerate}
\end{lemma}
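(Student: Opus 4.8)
The plan is to solve the linearized error equation \eqref{eq:ErrorLinDyn} explicitly in terms of a state transition operator and then read off the four correlations from the white-noise covariance structure of Assumption \ref{asm:Noise}. First I would introduce the state transition operator $\Phi(t,s) \in L(\mathfrak{g}^2,\mathfrak{g}^2)$ of the homogeneous part $\dot{\Delta} = (A-KC)\Delta$, i.e. the solution of $\partial_t \Phi(t,s) = \big(A(t)-K(t)C\big)\Phi(t,s)$ with $\Phi(s,s)=I$, and write the variation-of-constants formula
\[
\Delta(t) = \Phi(t,t_0)\Delta(t_0) + \int_{t_0}^{t} \Phi(t,s)\big(-B w(s) + K(s) v(s)\big)\,ds .
\]
Only the first identity is representative, so I would establish $E[w(t)\otimes\Delta(t)]$ in full and obtain the other three by symmetry.

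Next I would tensor $w(t)$ against this expression and take expectations, invoking the mutual independence and zero-mean hypotheses. The term $E[w(t)\otimes \Phi(t,t_0)\Delta(t_0)]$ vanishes because $w$ is independent of the initial data $\Delta(t_0)$ and has zero mean, while $E[w(t)\otimes \Phi(t,s)K(s)v(s)]$ vanishes because $v$ and $w$ are independent with zero mean. Hence only the $w$-driven integral survives,
\[
E[w(t)\otimes\Delta(t)] = -\int_{t_0}^{t} E\big[w(t)\otimes \Phi(t,s)B\, w(s)\big]\,ds .
\]
Using the identity of Fact \ref{fact:Trace} that moves a linear operator into a tensor slot, I would rewrite $w(t)\otimes \Phi(t,s)B\,w(s) = \big(I\otimes \Phi(t,s)B\big)\big(w(t)\otimes w(s)\big)$ and substitute the covariance $E[w(t)\otimes w(s)] = \delta(t-s)Q(t)$.

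The decisive step is evaluating $\int_{t_0}^{t}\delta(t-s)\big(I\otimes\Phi(t,s)B\big)Q(t)\,ds$. Because the Dirac mass sits exactly at the upper endpoint $s=t$ of the integration window, only half of its weight is captured, so the integral equals $\frac{1}{2}\big(I\otimes \Phi(t,t)B\big)Q(t) = \frac{1}{2}\big(I\otimes B\big)Q(t)$, which yields statement 1. This endpoint factor $\frac{1}{2}$ is the Stratonovich/It\^o-type subtlety and is the main obstacle; I would justify it by the symmetric-delta convention $\int_{t_0}^{t}\delta(t-s)\phi(s)\,ds = \frac{1}{2}\phi(t)$, or equivalently by passing to the underlying Brownian increments and computing the endpoint correlation $E[\Delta(t)\,dW_t]$. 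Statement 2 follows identically with $v$, $K(s)$, and $R(t)$ replacing $w$, $-B$, and $Q(t)$; statements 3 and 4 are obtained by interchanging the two tensor factors, which replaces $(I\otimes B)$ and $(I\otimes K)$ by $(B\otimes I)$ and $(K\otimes I)$ respectively.
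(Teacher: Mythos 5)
Your proposal is correct and follows essentially the same route as the paper's own proof: the state transition operator $\Phi(t,s)$ with the variation-of-constants formula, elimination of the initial-condition and cross-noise terms via Assumption \ref{asm:Noise}, the tensor identity of Fact \ref{fact:Trace} to pull $\Phi(t,s)B$ into one slot, and the half-weight evaluation of the Dirac delta at the endpoint of the integration interval. Your explicit justification of the factor $\tfrac{1}{2}$ via the symmetric-delta convention is in fact slightly more careful than the paper, which states that evaluation without comment.
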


\begin{proof}
Let 
\[
\R^2 \ni (t,\tau) \mapsto \Phi(t,\tau) \in L(\mathfrak{g}^2,\mathfrak{g}^2)
\]
be the state transition operator for the autonomous system
\[
\dot{\Delta}(t) = \Big(A(t)-K(t)C\Big) \Delta(t).
\]
Then, the solution of the error dynamics \eqref{eq:ErrorLinDyn} with the boundary condition \m{\Delta (t_0) = \Delta_0} is given by
\[
\Delta(t) = \Phi(t,t_0)\Delta_0 + \int_{t_0}^{t} \Phi(t,\tau) \big(K(\tau) v(\tau)-B(\tau)w(\tau)\big) d\tau.
\]
We now turn to prove: \m{E \left[ w(t) \otimes \Delta(t) \right] = -\frac{1}{2}\big(I \otimes B \big) Q(t)}. 
\begin{align}\nonumber
& E\left[w(t)\otimes \Delta(t)\right] \\ \nonumber
&\quad=  E \left[w(t)\otimes \Phi(t,t_0) \Delta_0\right]\\\nonumber
&\quad \;\; + E\left[w(t) \otimes \int_{t_0}^{t} \Phi(t,\tau) \big(K(\tau) v(\tau)-B(\tau)w(\tau)\big) d\tau \right]\\\nonumber
&\quad = E\left[(I\otimes\Phi(t,t_0))(w(t)\otimes \Delta_0)\right] \\\nonumber
&\quad \;\; + E\left[\int_{t_0}^{t} \big(I\otimes  \Phi(t,\tau) K(\tau) \big) \big(w(t)\otimes v(\tau)\big) d\tau\right]\\
&\quad \;\; - E\left[\int_{t_0}^{t} \big(I\otimes  \Phi(t,\tau) B(\tau) \big) \big(w(t)\otimes w(\tau)\big) d\tau\right] . \label{eq:CovStW}
\end{align}
Using Assumption \ref{asm:Noise}, \eqref{eq:CovStW} simplifies to 
\begin{align*}
E\left[w(t)\otimes \Delta(t)\right] & = -\int_{t_0}^{t} \big(I\otimes  \Phi(t,\tau) B(\tau) \big) Q(\tau) \delta(t-\tau) d\tau\\
& = -\frac{1}{2}\big(I \otimes B \big) Q(t).
\end{align*}
The remaining assertions of Lemma \ref{lem:NoiseCov} can be proved in an identical manner.  

\end{proof}

\begin{lemma}\label{lem:Operator}
Let \m{T,U,V,W} be the finite dimensional vector spaces, and let \m{X \in L(T,U), Y \in L(V,W)} and \m{Z\in T \otimes V = L(T,V)}. Then 
\[
\big(X\otimes Y\big)  Z = Y \circ Z \circ X^*,
\] 
where $Z$ on the left side is regarded as an element of $T \otimes V$ and $Z$ on the right side as an element of $L(T,V)$.
\end{lemma}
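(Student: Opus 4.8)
The plan is to reduce to rank-one tensors and conclude by bilinearity. Since $T\otimes V$ is spanned by simple tensors $a\otimes b$ with $a\in T$ and $b\in V$, and since both sides of the asserted identity are linear in $Z$ (the left side obviously, the right side because composition is linear in its middle factor), it suffices to establish
\[
(X\otimes Y)(a\otimes b) = Y\circ(a\otimes b)\circ X^*
\]
for every simple tensor; the general case then follows by writing $Z$ as a finite sum of simple tensors.

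First I would evaluate the left-hand side. Recalling that the tensor product of two linear maps acts on a simple tensor by $(X\otimes Y)(a\otimes b) = (Xa)\otimes(Yb)$, which lives in $U\otimes W = L(U,W)$, the identification of Fact~\ref{fact:Trace} gives, for any $x\in U$,
\[
\big((X\otimes Y)(a\otimes b)\big)x = \ip{Xa}{x}\,Yb.
\]
Second, I would evaluate the right-hand side at the same $x\in U$ by tracing it through the composition: $X^* x\in T$, then $(a\otimes b)(X^* x) = \ip{a}{X^* x}\,b\in V$, and finally $Y$ yields $\ip{a}{X^* x}\,Yb$. Invoking the defining property of the adjoint, $\ip{a}{X^* x} = \ip{Xa}{x}$, the two expressions coincide for every $x$, so the operators in $L(U,W)$ are equal. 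This settles the simple-tensor case, and bilinearity finishes the lemma.

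The computation itself is short; the only point requiring care is the bookkeeping of the identifications. One must keep distinct the three uses of Fact~\ref{fact:Trace}, namely $T\otimes V\cong L(T,V)$ for the input $Z$, $U\otimes W\cong L(U,W)$ for the output, and the action $X\otimes Y\colon T\otimes V\to U\otimes W$ on the tensor side, and one must ensure that the inner products defining the adjoint $X^*\colon U\to T$ and the two identifications are those induced from the ambient space, so that the crossing relation $\ip{a}{X^* x}=\ip{Xa}{x}$ is applied between the correct spaces. Once these conventions are pinned down the verification is immediate. As an aside, the same identity may be obtained more structurally by factoring $X\otimes Y = (X\otimes I)\circ(I\otimes Y)$ and applying the two composition rules of Fact~\ref{fact:Trace} in turn.
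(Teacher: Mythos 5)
Your proof is correct and takes essentially the same route as the paper's: both reduce by linearity to simple tensors $a\otimes b$ and verify the identity in that rank-one case. The only cosmetic difference is that you check the rank-one case pointwise, evaluating both operators at $x\in U$ and using $\ip{a}{X^*x}=\ip{Xa}{x}$, whereas the paper directly invokes the composition rules of Fact~\ref{fact:Trace} — precisely the more structural route you sketch in your closing aside.
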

\begin{proof}
Write  \m{Z} as 
\[
Z = \sum_{i,j}  t_i \otimes v_j
\]
for some $t_i$'s in $T$ and $v_j$'s in $V$.
Then using Fact \ref{fact:Trace}, 
\begin{align*}
\big(X \otimes Y \big)  Z &  = \big(X \otimes Y)  \Big(\sum_{i,j}  t_i \otimes v_j\Big)\\
& = \sum_{i,j}  \Big(X(t_i) \otimes Y (v_j) \Big)\\
& = \Big(\sum_{i,j}  t_i \otimes Y(v_j) \Big)\circ X^* \\
& = Y \circ \Big(\sum_{i,j}  t_i \otimes v_j\Big) \circ X^*\\
& = Y \circ Z \circ X^*.
\end{align*}
This proves the assertion. 
\end{proof}

\end{document}